\sloppy \pagestyle{plain}\binoppenalty=10000 \relpenalty=10000
\newtheorem{theorem}{Theorem}[section]
\newtheorem{lemma}{Lemma}[section]
\newtheorem{corollary}{Corrolary}[section]
\begin{document}

\centerline {\bf Klein Foams }

{\ }

\centerline {\bf Antonio F. Costa {\footnote {Partially supported
by MTM2008-00250}},}

\centerline{Departamento de Matemсticas Fundamentales}

\centerline{UNED}

\centerline{acosta@mat.uned.es}

{\ }

\centerline {\bf Sabir M. Gusein-Zade $^2$,}

\centerline{Moscow State University}

\centerline{Independent University of Moscow}

\centerline{sabir@mccme.ru}

{\ }

\centerline{\bf Sergey M. Natanzon {\footnote {Partially supported
by grants RFBR-10-01-00678, NSh-709.2008.1}}}

\centerline{A.N.Belozersky Institute, Moscow State University}

\centerline{Independent University of Moscow}

\centerline{Institute Theoretical and Experimental Physics}

\centerline{natanzons@mail.ru}

{\ }

\centerline {\bf Abstract}

{\ }

Klein foams are analogues of Riemann and Klein surfaces with
one-dimensional singularities. We prove that the field of
dianalytic functions on a Klein foam $\Omega$ coincides with the
field of dianalytic functions on a Klein surface $K_{\Omega}$. We
construct the moduli space of Klein foams and we prove that the
set of classes of topologically equivalent Klein foams form
an analytic space homeomorphic to $\mathbb{R}^{n}/\mathsf{Mod}$, where $\mathsf{Mod}$ is a discrete group.

\tableofcontents

{\ }

\section{Introduction}

\label{r1}

Foams are surfaces with one-dimensional singularities. Topological foams are
exploited
in different fields of mathematical physics \cite{B,O,R,KR2} and topology \cite{K,KR1,MSV,TT,PT}. A topological foam is constructed from finitely many ordinary
surfaces with boundaries ("patches") by gluing them along segments of their
boundaries. The glued boundaries of surfaces form a "seamed graph", which is
the singular part of the complex.

Here we consider Klein foams that are analogues of Riemann and Klein surfaces
for foams. Thus we consider a foam with concordant complex structures on its
patches. "Concordant" means that there exists a dianalytic map from the foam
to the complex disk $D$. Maps of this type on a Klein foam $\Omega$ we call
dianalytic functions on $\Omega$.

In section \ref{r2} we prove that the field of dianalytic functions on a Klein
foam $\Omega$ coincides with the field of dianalytic functions on a Klein
surface $K_{\Omega}$. Moreover there exists a dianalytic map $\varphi_{\Omega
}: \Omega\rightarrow K_{\Omega}$ such that any dianalytic function on $\Omega$
is of the form $f\varphi_{\Omega}$ where $f$ is a dianalytic function on
$K_{\Omega}$.

We say that Klein foams $\Omega$ and $\Omega^{\prime}$ are topologically
equivalent, if there exist homeomorphisms $f_{\Omega}:\Omega\rightarrow
\Omega^{\prime}$ and $f_{K}:K_{\Omega}\rightarrow K_{\Omega^{\prime}}$ such
that $\varphi_{\Omega^{\prime}}f_{\Omega}=\varphi_{\Omega}f_{K}$. In section
\ref{r3} we prove that any class $M$ of topological equivalence of Klein foams
(i.e. the set of Klein foams with a fixed topological type) has a natural
analytic structure.
It is connected and homeomorphic to $\mathbb{R}^{n}/\mathsf{Mod}$,
where $\mathsf{Mod}$ is a discrete group. This gives a topological
description of the moduli space of Klein foams.

A motivation for study of the moduli space of Klein foams is string theory and
2D gravity \cite{B,R,HP,M}. In subsection \ref{r2.3}
we prove that our definition of Klein foams is compatible with cyclic foam
topological field theory \cite{N} that is a rough topological approach to the
corresponding version of the string theory.

Part of this work was done during the stays of one of the authors
(S.N.) in Max-Plank-Institute in Bonn. He is grateful to MPIM for
their hospitality and support.
\section{Foams}

\label{r2}

\subsection{Topological foams}

\label{r2.1}

We shall consider \textit{generalized graphs}, that is one dimentional spaces
consisting of (finitely many) verticies and edges, where edges are either
segments (connecting vertices) or (isolated) circles.

A \textit{topological foam} $\Omega$ is a triple $(S,\Delta,\varphi)$, where
\begin{itemize}
\item $S=S(\Omega)$ is a compact surface (2-manifold, possibly non-connected
and non-orientable) with boundary $\partial S$ (which consists of
pair-wise non-intersecting circles);
\item $\Delta=\Delta(\Omega)$ is a generalized graph;
\item $\varphi=\varphi_{\Omega}:\partial S\to\Delta$ is \textit{the gluing
map}, that is, a map such that:\newline a) $\mbox{Im\,}\varphi=\Delta
$;\newline b) on each connected component of the boundary $\partial S$,
$\varphi$ is a homeomorphism on a circle in $\Delta$;\newline c) for an edge
$l$ of $\Delta$, any connected component of $S$ contains at most one connected
component of $\varphi^{-1}(l\setminus\partial l)$.
\end{itemize}

The result of the gluing (i.e. $S\cup\Delta$ with $x$ and $\varphi(x)$
identified for $x\in\partial S$) will be denoted by $\check{\Omega}$. Let
$\Omega_{b}$ be the set of vertices of the graph $\Delta$. We say that a foam
$\Omega$ is \textit{normal} if, for any vertex $v$ of the graph $\Delta$, its
punctured neighbourhood in $\check{\Omega}$ is connected.

For a surface $S$ with boundary, the \textit{double} of $S$ is an
oriented surface $\hat{S}$ defined in the following way. Let
$\widetilde{S}$ be the \textquotedblleft
orientation\textquotedblright\ 2-fold covering over $S$. (The
surface $\widetilde{S}$ consists of pairs $(x,\delta)$, where $x$
is a points of $S$ and $\delta$ is a local orientation of $S$ at
the point $x$. If the surface $S$ is orientable, $\widetilde{S}$
is the union of 2 copies of $S$ with different orientations. The
surface $\widetilde{S}$ is oriented in the natural way. If $S$ is
a smooth surface, the surface $\widetilde{S}$ is also smooth.) The
surface $\widetilde{S}$ possesses a natural involution $\sigma$,
$\widetilde{S}/\sigma=S$. The double $\hat{S}$ of the surface $S$
is the surface $\widetilde{S}$ with the points $(x,\delta)$ and
$(x,-\delta)$ identified for $x\in\partial{S}$ ($-\delta$ is the
local orientation opposite to the orientation $\delta$).

A \textit{morphism} $f$ of topological foams $\Omega^{\prime}\rightarrow
\Omega^{\prime\prime}$ ($\Omega^{\prime}=(S^{\prime}, \Delta^{\prime}, \varphi^{\prime})$,
$\Omega^{\prime\prime}=(S^{\prime\prime},\Delta
^{\prime\prime},\varphi^{\prime\prime})$) is a pair
$(f_{S},f_{\Delta})$ of (continuous) maps $f_{S}:
\hat{S}^{\prime}\to\hat{S}^{\prime \prime}$ and $f_{\Delta}:
\Delta^{\prime}\to\Delta^{\prime\prime}$ such that $f_{S}$ is an
orientation preserving ramified covering commuting with the
natural involutions on $\hat{S}^{\prime}$ and
$\hat{S}^{\prime\prime}$, $\varphi^{\prime\prime}\circ f_{S} =
f_{\Delta}\circ \varphi^{\prime}$ and
$f_{\Delta}|_{\Delta^{\prime}\setminus\Omega^{\prime }_{b}}$ is a
local homeomorphism $\Delta^{\prime}\setminus\Omega^{\prime}_{b}$
on $\Delta^{\prime\prime}\setminus\Omega^{\prime\prime}_{b}$.


\subsection{Dianalytic foams}

\label{r2.2}

A \textit{Klein surface} \cite{AG} (see also \cite{N1}) is a
surface $S$ (possibly with boundary and/or non-orientable) with a
class of equivalence of \textit{dianalytic atlases}. A
\textit{dianalytic atlas} consists of charts
$\{(U_{\alpha},\psi_{\alpha})|\alpha\in\mathcal{A}\}$, where
$\Omega =\bigcup_{\alpha}U_{\alpha}$, $\psi_{\alpha}:
U_{\alpha}\rightarrow D\subset\mathbb{C}$ is a homeomorphism on
$\psi_{\alpha}(U_{\alpha})$ and $\psi_{\alpha}\psi_{\beta}^{-1}$
is a holomorphic or an anti-holomorphic map on
$\psi_{\alpha}(U_{\alpha})\cap\psi_{\beta}(U_{\beta})$ for any
$\alpha,\beta\in\mathcal{A}$. (Two dianalytic atlases are called
equivalent if their union is also a dianalytic atlas.)

One can see that a Klein surface is a surface $S$ with a complex analytic
structure on the double $\hat{S}$ such that the natural involution on $\hat
{S}$ is anti-holomorphic. A \textit{morphism} of Klein surfaces is an analytic
map between their doubles which commutes with the natural involutions. Moduli
space of Klein surfaces were studied in \cite{N2,N4,S}. The category of
compact Klein surfaces is isomorphic to the category of real algebraic curves
\cite{AG}.

A normal topological foam $\Omega=(S,\Delta,\varphi)$, where $S$ is a Klein
surface, is called \textit{dianalytic}. A \textit{morphism} $f$ of dianalytic
foams $\Omega^{\prime}\rightarrow\Omega^{\prime\prime}$ ($\Omega^{\prime
}=(S^{\prime},\Delta^{\prime},\varphi^{\prime})$, $\Omega^{\prime\prime
}=(S^{\prime\prime},\Delta^{\prime\prime},\varphi^{\prime\prime})$) is a
morphism $(f_{S},f_{\Delta})$ of the corresponding topological foams such that
$f_{S}$ is a morphism of Klein surfaces.

Let $\Omega_{0}$ be the dianalytic foam $(D,\partial D,I_{D})$
where $D$ is the unit disk in the complex line, and $I_{D}$ is the
tautological map. A \textit{dianalytic function} on a dianalytic
foam $\Omega$ is a morphism of $\Omega$ to $\Omega_{0}$.

Define now \textit{a Klein foam} as a dianalytic foam $\Omega=(S,\Delta
,\varphi)$ admitting an everywhere locally non-constant dianalytic function
$f_{0}$. Any Klein surface $K$ in a natural way can be considered as a Klein foam.

\begin{theorem}
\label{t2.1} For any Klein foam $\Omega$ there exists a Klein
surface $K=K_{\Omega}$ and a dianalytic morphism $\phi_{\Omega}:
\Omega\rightarrow K$ such that the correspondence $f\mapsto
f\phi_{\Omega}$ is an isomorphism between the sets (fields) of
dianalytic functions on $K$ and on $\Omega$ respectively, i.e. any
dianalytic function $f: \Omega\rightarrow\Omega_{0}$ is the
composition $f_{(K)}\phi_{\Omega}$, where $f_{(K)}:
K\rightarrow\Omega _{0}$ is a dianalytic function and vice versa,
for any dianalytic function $f_{(K)}$ on $K$, the composition
$f_{(K)}\phi_{\Omega}$ is a dianalytic function on the foam
$\Omega$.
\end{theorem}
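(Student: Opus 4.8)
The plan is to construct $K_\Omega$ as the quotient of the glued space $\check\Omega$ by the equivalence relation of \emph{inseparability by dianalytic functions}, and to show, using the local non-constancy of $f_0$, that this quotient carries a natural structure of compact Klein surface for which the quotient map is a dianalytic morphism of foams. Throughout I would work on the doubles $\hat S$, $\widehat{\Omega_0}=\mathbb{P}^1$, etc., so as to stay in the holomorphic category, and descend at the end using the anti-holomorphic involutions.

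First I would carry out the local analysis at a seam. Let $l$ be an edge of $\Delta$ and $x$ an interior point of $l$; by condition (c) in the definition of a foam, $\varphi^{-1}(l\setminus\partial l)$ consists of finitely many open boundary arcs $a_1,\dots,a_k$ lying in distinct components of $S$, each mapped homeomorphically onto $l\setminus\partial l$. In suitable dianalytic charts a neighbourhood of $a_j$ in its patch is a half-disc $U_j^{+}$ with $a_j$ as real diameter. A dianalytic function $f$ on $\Omega$ restricts on $U_j^{+}$ to a holomorphic function, continuous up to $a_j$, with boundary germ on $\partial D$ equal to $f_\Delta|_l$; by the Schwarz reflection principle such a function is determined by that germ, so all the restrictions $f|_{U_j^{+}}$ agree after the unique dianalytic identification $\iota_j\colon U_j^{+}\to U_1^{+}$ equal to the identity on $a_1=\dots=a_k=l\setminus\partial l$ (the existence of $\iota_j$ being obtained by applying the same argument to $f_0$, which is locally non-constant, hence a local branched cover, near $a_j$). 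Thus near a generic seam point $\Omega$ is dianalytically $k$ copies of a single half-disc identified along their diameters, and every dianalytic function is pulled back from that half-disc; running the identity theorem on the doubles then propagates these identifications, so maximal analytic continuation along the patches forces a canonical partial identification of the components of $S$. A parallel but more delicate discussion is needed around the vertices $\Omega_b$ of $\Delta$, where the normality of $\Omega$ guarantees the punctured neighbourhood in $\check\Omega$ is connected and hence that the local identifications assemble consistently.

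Next I would define $K_\Omega$. For $y,z\in\check\Omega$ put $y\sim z$ iff $h(y)=h(z)$ for every dianalytic function $h$ on $\Omega$, set $K_\Omega:=\check\Omega/\!\sim$, and let $\phi_\Omega\colon\Omega\to K_\Omega$ be induced by the quotient map. The substance is to equip $K_\Omega$ with a Klein surface structure. On each component $\hat S_i$ the map $(f_0)_S$ is a non-constant holomorphic map to $\mathbb{P}^1$, so its fibres are finite; a finite family $f_0,f_1,\dots,f_m$ of dianalytic functions (the extra ones supplied by analytic continuation across seams as above) then separates any prescribed finite set and, by the standard argument for meromorphic functions on a compact Riemann surface, can be arranged near any point so that some $f_r$ is a local coordinate there. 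This gives: (i) $\sim$-classes are finite and $\sim$ is closed, so $K_\Omega$ is Hausdorff and compact; (ii) the charts given by the $f_r$ descend to a dianalytic atlas, i.e.\ $\widehat{K_\Omega}$ is a compact Riemann surface with an anti-holomorphic involution; (iii) $\phi_\Omega$ is on each $\hat S_i$ a holomorphic branched covering onto its image, commutes with the involutions, and together with the induced map $\Delta\to\Delta(K_\Omega)$ is a dianalytic morphism of foams. For the correspondence: if $f_{(K)}$ is dianalytic on $K_\Omega$ then $f_{(K)}\phi_\Omega$ is a composite of morphisms, hence dianalytic on $\Omega$; conversely any dianalytic $f$ on $\Omega$ is constant on $\sim$-classes by construction, so $f=f_{(K)}\phi_\Omega$ for a unique function $f_{(K)}$ on $K_\Omega$, and since $\phi_\Omega$ is locally a branched covering this $f_{(K)}$ is holomorphic off the discrete branch locus (compose $f$ with local sections), extends across it by boundedness (values in $D$), and intertwines the involutions because $f$ and $\phi_\Omega$ do; thus $f_{(K)}$ is dianalytic on $K_\Omega$. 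As $\phi_\Omega$ is surjective, $f\mapsto f\phi_\Omega$ is injective, and the two directions show it is a ring, hence field, isomorphism.

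The hard part will be the middle step: proving that the inseparability quotient is genuinely a Klein surface and not merely a topological space. This needs (a) enough dianalytic functions — extracted from $f_0$ together with its continuations across seams — to separate points and to furnish local coordinates, ruling out a point where all dianalytic functions are simultaneously too degenerate; and (b) control of the gluing over the vertices of $\Delta$, where the normality hypothesis is really used and where one must verify that the local half-disc identifications of the first step fit together into a manifold-with-boundary chart on $K_\Omega$. The seam analysis via Schwarz reflection and the finiteness coming from $f_0$ being locally non-constant are the two levers that make this work.
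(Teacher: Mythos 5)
Your overall architecture differs from the paper's in one essential point, and that difference is where the gap sits. The paper does \emph{not} define $K_{\Omega}$ as the quotient by inseparability under all dianalytic functions. It defines a purely local, geometric relation: $q_1\sim q_2$ if they are endpoints of paths starting at a common interior point of an edge of $\Delta$ whose $f^{0}$-images coincide for all $t$ and avoid ramification values; the equivalence relation is the one \emph{generated} by this. Because each identification is realized by a local homeomorphism (indeed a local biholomorphism read through the chart $f^{0}$), the quotient is manifestly a surface with a complex structure and an anti-holomorphic involution --- no point-separation theorem is needed to see this. Your first paragraph (Schwarz reflection across a seam, $f_0$ as local coordinate) is essentially the paper's argument that dianalytic functions take equal values at such identified points; that part is sound and parallel to the paper.

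The gap is in your middle step. To show that the inseparability quotient is Hausdorff, has finite classes, and carries dianalytic charts, you invoke ``a finite family $f_0,f_1,\dots,f_m$ of dianalytic functions (the extra ones supplied by analytic continuation across seams)'' that separates points and furnishes local coordinates. No such family is constructed, and it cannot be obtained the way you suggest: analytically continuing a local branch of a function across a seam into a neighbouring patch produces a germ on that patch, not a global morphism $\Omega\to\Omega_0$, and a priori the only global dianalytic function you are given on a Klein foam is $f_0$ (up to post-composition with automorphisms of $D$). Two points in the same fibre of $f_0$ that are \emph{not} forced together by continuation might, for all you have shown, be inseparable by every global dianalytic function, in which case your quotient collapses them and $\phi_\Omega$ need not be a branched covering. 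The natural source of a separating family is Riemann--Roch/Alling--Greenleaf applied to the compact Klein surface $K_\Omega$ itself and pulled back along $\phi_\Omega$ --- which is circular in your setup. To repair the argument you should either (a) replace your relation by the generated-by-continuation relation, as the paper does, so that the quotient is locally modelled on $f^{0}$-charts by construction, and only afterwards deduce the abundance of dianalytic functions; or (b) prove first that your relation coincides with that one, which amounts to the same work. The converse direction of your last paragraph (descending $f$ and pulling back $f_{(K)}$, with normality controlling the vertices) is fine and matches the paper.
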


\begin{proof}
Let us call points $q_{1}$ and $q_{2}$ of the surface $\hat{S}$
pre-equivalent if there exist paths $\gamma_{j}(t)$ on $\hat{S}$,
$j=1, 2 $, $t\in[0,1]$, such that:

\begin{itemize}
\item $\varphi(\gamma_{1}(0))=\varphi(\gamma_{2}(0))$ is an inner point of an
edge of the graph $\Delta$ (and therefore not a ramification point of the map
(dianalytic function) $f^{0}$);

\item $f^{0}(\gamma_{1}(t))=f^{0}(\gamma_{2}(t))$ for $t\in[0,1]$;

\item $\gamma_{j}(1)=q_{j}$ for $j=1,2$;

\item $f^{0}(\gamma_{j}(t))$ is not a ramification point of the map $f^{0}$
for any $t\in[0,1)$, $j=1,2$.
\end{itemize}

The values of a dianalytic function $f$ on the foam $\Omega$ at
pre-equivalent points coincide. This follows from the uniqueness
of the analytic continuation taking into account the fact that
$f^{0}$ can be considered as a local coordinate for all points
$\gamma_{j}(t)$ with $t\in\lbrack0,1)$ and the restrictions of $f$
to neighborhoods of the points $\gamma_{j}(0)$ in $\partial
S_{\Omega}$, $j=1,2$, coincide as functions of $f_{0}$.

Let us call points $q^{\prime}$ and $q^{\prime\prime}$ from
$\hat{S}$ equivalent if there exists a sequence of points
$q_{j}\in \hat{S}$, $j=0,1,\ldots,n$, such that
$q_{0}=q^{\prime}$, $q_{n}=q^{\prime\prime}$, and the point
$q_{j-1}$ is pre-equivalent to the point $q_{j}$ for
$j=1,2,\ldots,n$. Outside of (the preimage of) the set of
ramification points the described equivalence relation identifies
some points in the preimages in $\hat{S}$ (with respect to the
function $f_{0}$) of points of $\hat{D}$. If two non-ramification
points are identified, they have neighborhoods which are
identified with each other by this equivalence relation and the
identification is a homeomorphism. This means that the factor
${K}_{\Omega}$ by this equivalence relation inherits the structure
of a (closed) complex analytic curve with an anti-holomorphic
involution and with a map to $\widetilde{D}$. The factorization
map $\phi_{\Omega}$ is a (usual) covering outside of the set of
ramification points of the map $f^{0}$. A dianalytic function on
the foam $\Omega$ induces a dianalytic function on $K_{\Omega}$.
This implies that it is induced from a dianalytic function on
$K_{\Omega}$.

For a dianalytic function $f_{(K)}$ on $K_{\Omega}$, the
composition $f_{(K)}\phi_{\Omega}$ may fail to be a dianalytic
function on the foam $\Omega$ only if there exist two points
$q_{1}$ and $q_{2}$ of $\partial S_{\Omega}$ such that they map to
the same vertex of the graph $\Delta$ but are not equivalent. In
particular this means that no point of $\partial S_{\Omega}$ of a
neighborhood of the point $q_{1}$ is equivalent to one of a
neighborhood of the point $q_{2}$. This contradicts the
requirement of normality of the foam $\Omega$. This proves the
statement.
\end{proof}

We call $\phi_{\Omega}: \Omega\rightarrow\Omega_{K}$ \textit{the
canonical morphism}. An isomorphism
$f:\Omega^{\prime}\rightarrow\Omega^{\prime\prime}$ induces an
equivalence of the canonical morphisms $\phi_{\Omega^{\prime}}:
\Omega^{\prime}\rightarrow\Omega^{\prime}_{K}$ and
$\phi_{\Omega^{\prime\prime}}:
\Omega^{\prime\prime}\rightarrow\Omega^{\prime\prime}_{K}$.

\begin{corollary}
\label{c2.2} Let
$f:\Omega^{\prime}\rightarrow\Omega^{\prime\prime}$ be an
isomorphism of Klein foams. There exists an isomorphism of Klein
surfaces
$f_{K}:\Omega_{K}^{\prime}\rightarrow\Omega_{K}^{\prime\prime}$
such that
$f_{K}\phi_{\Omega^{\prime}}=\phi_{\Omega^{\prime\prime}}f$.
\end{corollary}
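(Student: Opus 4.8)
The plan is to run the construction of $K_\Omega$ from the proof of Theorem~\ref{t2.1} naturally over isomorphisms. First I would fix a locally non-constant dianalytic function $f^0$ on $\Omega'$; since $f=(f_S,f_\Delta)$ is an isomorphism of Klein foams, $f^0\circ f^{-1}$ (more precisely the function whose components are $f_S^{-1}$ and $f_\Delta^{-1}$ composed with $f^0$) is a locally non-constant dianalytic function on $\Omega''$, so both $K_{\Omega'}$ and $K_{\Omega''}$ can be built using \emph{these} compatible choices of reference function. The point is that $f_S:\hat S'\to\hat S''$ carries the pre-equivalence relation on $\hat S'$ to the pre-equivalence relation on $\hat S''$: the defining conditions (a path mapping to an inner edge point under $\varphi$, the two lifts sharing the same image under the reference function, avoidance of ramification points) are all expressed purely in terms of $f^0$, $\varphi$, and $f_S$, and these intertwine by the morphism identities $\varphi''\circ f_S=f_\Delta\circ\varphi'$ and (by our choice) $f^0{}''\circ f_S = f^0{}'$. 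Hence $f_S$ descends to a well-defined bijection $f_K:K_{\Omega'}\to K_{\Omega''}$ of the quotients, and by construction $f_K\circ\phi_{\Omega'}=\phi_{\Omega''}\circ f$ on $\hat S'$ (and the corresponding identity on the graph side is automatic, since $\phi_\Omega$ on $\Delta$ is determined by the gluing and $f_\Delta$ intertwines the gluings).

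Next I would check that $f_K$ is a morphism of Klein surfaces, i.e.\ analytic on doubles and commuting with the anti-holomorphic involutions. Away from the (finitely many) ramification points of the reference functions, $\phi_{\Omega'}$ and $\phi_{\Omega''}$ are ordinary coverings and $f_S$ is an analytic isomorphism commuting with the involutions, so $f_K$ is locally an analytic isomorphism there and commutes with the involutions; this is again just transport of structure, since the complex structure and involution on $K_{\Omega'}$ were \emph{defined} by pushing forward those of $\hat S'$. The analyticity at the exceptional points then follows from the removable singularity theorem (Riemann extension), exactly as in the proof of Theorem~\ref{t2.1} where the quotient was shown to inherit the structure of a closed complex analytic curve: $f_K$ is continuous and analytic off a finite set, hence analytic everywhere. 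Finally, running the same argument for $f^{-1}$ produces an inverse $(f^{-1})_K$, and by the relation $f_K\circ\phi_{\Omega'}=\phi_{\Omega''}\circ f$ together with surjectivity of $\phi_{\Omega'}$ one gets $(f^{-1})_K = (f_K)^{-1}$, so $f_K$ is an isomorphism.

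I expect the main obstacle to be the bookkeeping at the ramification points: one must verify that the equivalence relation (not just pre-equivalence) is genuinely transported by $f_S$ even near points where $f^0$ fails to be a local coordinate, and that the resulting map on the quotient is continuous across those points rather than merely defined on the complement of a finite set. This is handled by the same mechanism used in Theorem~\ref{t2.1}: equivalence classes are closures (in the appropriate sense) of chains of pre-equivalences among non-ramification points, $f_S$ respects chains, and a homeomorphism of $\hat S'$ onto $\hat S''$ sends the closure of a set to the closure of its image; continuity of $f_K$ then follows from continuity of $f_S$ and the fact that $\phi_{\Omega'},\phi_{\Omega''}$ are quotient maps. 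Everything else is a routine diagram chase using the morphism identities established in Section~\ref{r2}.
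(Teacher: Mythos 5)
Your proposal is correct and follows exactly the route the paper intends: the paper states this corollary without proof, as an immediate consequence of the remark that an isomorphism of Klein foams induces an equivalence of the canonical morphisms, and your argument is precisely the unwinding of that claim (transport of the pre-equivalence relation by $f_S$ via the intertwining identities, descent to the quotients, and analyticity off the ramification set extended by removability). No gaps beyond the level of detail the paper itself leaves implicit in the proof of Theorem~\ref{t2.1}.
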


We say that Klein foams $\Omega^{\prime}=(S^{\prime}, \Delta^{\prime}, \varphi^{\prime})$ and $\Omega^{\prime\prime}=(S^{\prime\prime}, \Delta^{\prime\prime}, \varphi^{\prime\prime})$ have \textit{the same
topological type} if there exist isomorphisms of topological foams
$f:\Omega^{\prime}\rightarrow\Omega^{\prime\prime}$ and $f_{K}:\Omega
_{K}^{\prime}\rightarrow\Omega_{K}^{\prime\prime}$ such that $f_{K}\phi_{\Omega^{\prime}}=\phi_{\Omega^{\prime\prime}}f$.


\subsection{Strongly oriented foams}

\label{r2.3}

Klein Topological Field Theories describe rough topological approach
to the corresponding versions of string theory and Hurwitz numbers \cite{AN,AN1,AN2,AN3,CNP,LN}. It follows from
\cite{N}, that one can extend Klein Topological Field Theory to
\textit{oriented foams}. An oriented foam is a topological foam $(S,\Delta
,\varphi)$ with a special coloring of $S$.
A special coloring of $S$ exists, if and only if

\begin{itemize}
\item vertices of any connected component of $\Delta$ allow a cyclic order
that agrees with an orientation of $\partial S$ by $\varphi$;
\item $\varphi$ maps different connected components of the boundary of any
connected component of $S$ to different connected components of $\Delta$.
\end{itemize}

A special coloring defines (via $\varphi$) orientations of all edges of $\Delta$ compatible with the order of vertices.
An oriented foam $(S,\Delta,\varphi)$ is \textit{strongly oriented} if the
orientation of $\partial S$ is induced by an orientation of $S$.

\begin{lemma}
\label{l2.1} Let $\Omega=(S, \Delta, \varphi)$ be a strongly oriented foam.
Then there exists a morphism $(f_{S}, f_{\Delta})$ to the disk $($i.e. to the
topological foam $(D,\partial D, I_{D})$$)$ which is a homeomorphism on any
connected component of $\partial S$.
\end{lemma}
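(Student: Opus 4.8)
The plan is to build the morphism in two stages: first construct $f_{\Delta}\colon\Delta\to\partial D$ from the combinatorial data supplied by the orientation, and then extend the resulting boundary map to a ramified covering $f_{S}\colon\hat S\to\hat D$ of the doubles. Recall that, $\Omega$ being strongly oriented, the surface $S$ carries an orientation inducing the given orientation of $\partial S$, and the special coloring provides on each connected component $\Delta_{0}$ of $\Delta$ a cyclic order $v_{1},\dots,v_{k}$ of its vertices together with orientations of all edges of $\Delta_{0}$ compatible with this cyclic order and with the orientation of $\partial S$.

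For the first stage I would place $k$ distinct points $p_{1},\dots,p_{k}$ on $\partial D$ in the same cyclic order, set $f_{\Delta}(v_{i})=p_{i}$, and send an edge oriented from $v_{i}$ to $v_{j}$ homeomorphically onto the arc of $\partial D$ run through in the positive direction from $p_{i}$ until $p_{j}$ is first met; for a component with no vertices (an isolated circle) I would just choose a homeomorphism onto $\partial D$. Then $f_{\Delta}$ is a local homeomorphism on $\Delta\setminus\Omega_{b}$, and for every boundary component $C$ of $S$ the loop $\varphi(C)$, traversed according to the orientation of $\partial S$, runs through its edges in their orientations and meets the corresponding vertices in cyclic order exactly once (here one uses that $\varphi|_{C}$ is a homeomorphism onto $\varphi(C)$), so that $f_{\Delta}|_{\varphi(C)}$ is an orientation preserving homeomorphism onto $\partial D$. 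Consequently $g:=f_{\Delta}\circ\varphi\colon\partial S\to\partial D$ restricts to an orientation preserving homeomorphism on each boundary circle.

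For the second stage I would treat one connected component $S'$ of $S$ at a time. Then $\hat{S'}$ is a closed oriented surface with the orientation reversing involution $\sigma'$ whose fixed set is $\partial S'$, while $\hat D=S^{2}$ carries the reflection $\sigma''$ with fixed set $\partial D$. I want an orientation preserving ramified covering $f_{S'}\colon\hat{S'}\to S^{2}$, of a suitable degree $N$, with $f_{S'}\circ\sigma'=\sigma''\circ f_{S'}$ and $f_{S'}|_{\partial S'}=g|_{\partial S'}$; gluing these gives $f_{S}$, and $(f_{S},f_{\Delta})$ is then a morphism of topological foams, with $\varphi''\circ f_{S}=f_{\Delta}\circ\varphi$ holding on $\partial S$ by construction and $f_{\Delta}|_{\Delta\setminus\Omega_{b}}$ the required local homeomorphism, which is a homeomorphism on each component of $\partial S$ since $g$ is. To produce $f_{S'}$, cut $\hat{S'}$ along $\partial S'$ together with a suitable $\sigma'$-invariant system of auxiliary circles into pieces, prescribe on each piece a ramified covering onto one of the two closed hemispheres $\overline{D^{+}}$, $\overline{D^{-}}$ extending the given homeomorphism on the boundary circles, and reglue; the branched coverings of a disk with prescribed degree-one boundary behaviour exist for suitable $N$ (the Euler characteristic count can always be met and there is no Hurwitz-type obstruction over a disk), and the auxiliary circles can be chosen compatibly with $\sigma'$ so that the hemisphere covers match along every circle.

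The main obstacle is precisely this second stage. The naive idea of filling in a ramified covering $S'\to D$ that is a homeomorphism on $\partial S'$ fails as soon as $S'$ has positive genus or more than one boundary circle, for such a covering would be forced to have degree one and hence be a homeomorphism; thus $f_{S'}$ must be a covering of $S^{2}$ of strictly larger degree whose fibre over a point of $\partial D$ meets $\partial S'$ only once but also meets extra \emph{fold} circles lying in the interior of $\hat{S'}$. Arranging these auxiliary circles equivariantly, and matching the branched covers of the two hemispheres along all of them so that the outcome is a genuine morphism restricting to $g$ on $\partial S$, is the delicate point; the construction of $f_{\Delta}$ and the final check that $(f_{S},f_{\Delta})$ is a morphism are then routine.
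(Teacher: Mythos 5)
Your overall architecture---build $f_{\Delta}$ from the cyclic order supplied by the special coloring, check that $g=f_{\Delta}\circ\varphi$ is an orientation preserving homeomorphism on each boundary circle, then extend $g$ equivariantly to a ramified covering $\hat S\to\hat D=S^{2}$---is the same as the paper's, and your first stage is fine (indeed more explicit than the paper's one-line version). The problem is the second stage. Your stated reason for rejecting the ``naive'' construction is wrong: a ramified covering $S'\to D$ restricting to a homeomorphism on each of the $n$ components of $\partial S'$ is not forced to have degree one; its degree is $n$, since the fibre over a boundary point of $D$ contains exactly one point of each boundary circle. By Riemann--Hurwitz such a covering exists whenever $n\ge 2$ (e.g.\ the annulus double covers the disk branched over two interior points, with each boundary circle mapping homeomorphically); the only obstructed case is $n=1$ with positive genus. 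This error also makes your construction internally inconsistent: in the main text you assert that branched coverings of a disk with degree-one boundary behaviour ``exist for suitable $N$'', whereas $N$ is forced to equal the number of boundary circles of the piece, and existence fails exactly for a piece with one boundary circle and positive genus---a case your auxiliary circles must be arranged to avoid but which you never address (a collar-parallel circle merely pushes the bad piece inward, and a non-separating circle produces a piece adjacent to itself across that circle, where the two hemisphere covers cannot both be local homeomorphisms).

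Moreover, the step you yourself flag as ``the delicate point''---choosing the $\sigma'$-invariant system of circles so that the pieces are consistently two-coloured, each piece admits the required cover of its hemisphere, and the covers agree along every circle---is the entire content of the lemma in your approach, and it is left unproven. The paper sidesteps all of this: glue a disk to each of the $d$ boundary circles of $S$ to obtain a closed oriented surface, take a ramified covering of $S^{2}$ of degree $\ge d$ whose restriction to the $d$ added disks consists of prescribed homeomorphisms onto a fixed disk of $S^{2}$ (the radial extensions of the boundary maps, landing in the hemisphere with the opposite orientation), and delete the added disks; combining with the involutions then yields $f_{S}$. You should either adopt that route, or else (i) correct the degree count, (ii) use the hemisphere construction directly for every component with at least two boundary circles or with one boundary circle and genus zero, and (iii) supply an explicit decomposition (or the capping argument) for the remaining case.
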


\begin{proof}
Since $S$ is oriented, its double $\hat S$ consists of $S$ and of its copy with the other orientation (glued along the boundary $\partial S$). The double $\hat D$ of the disk $D$ is the 2-sphere $S^2$ (with the equator $\partial D$). To construct the map $f_S: \hat S \to S^2$, one can construct a map $f:S\to S^2$ sending the boundary $\partial S$ to $\partial D$ which is a local homeomorphism in a neighborhood of $\partial S$ and a ramified covering inside $S$. The map $f$ with the canonical involutions on $\hat S$ and on $\hat D$ define the map $f_S$. Let us fix orientation preserving maps $f^{\prime}: \partial S \to \partial D$ and $f_{\Delta}:\Delta\to \partial D$ (such that $f^{\prime}=\varphi f_{\Delta}$) compatible with the orientations of $\partial S$ and of the edges of $\Delta$. Gluing a disk to each component of the boundary $\partial S$, one obtains an oriented closed surface $S^{\prime}$. Now existence of the map $f$ with the described properties follows from the fact that there exists a map from the surface $S^{\prime}$ to the 2-sphere $S^2$ which is a ramified covering of high order: greater or equal to the number $d$ of components of the boundary $\partial S$. Moreover, one can choose this map so that disk neighborhoods of $d$ fixed points are sent to a fixed disk in $S^2$ by prescribed homeomorphisms. To obtain the desired map, one should use the map of the added disks to the copy of $D$ with the inverse orientation which is the radial extension of the map $f^{\prime}$.
\end{proof}

\begin{theorem}
\label{t2.2} Any strongly oriented foam allows a dianalytic structure, turning
such foam into a Klein foam.
\end{theorem}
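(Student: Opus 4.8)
The plan is to build the dianalytic structure by pulling back the standard complex structure on the disk via the topological morphism produced in Lemma~\ref{l2.1}. Concretely, let $\Omega=(S,\Delta,\varphi)$ be a strongly oriented foam and let $(f_S,f_\Delta)$ be the morphism to $(D,\partial D,I_D)$ furnished by Lemma~\ref{l2.1}, so that $f_S\colon\hat S\to S^2=\hat D$ is an orientation preserving ramified covering commuting with the natural involutions and restricting to a homeomorphism on each component of $\partial S$. First I would use $f_S$ to transport the complex structure of $S^2$ to $\hat S$: away from the (finitely many) ramification points, $f_S$ is a local homeomorphism, so the charts $f_S$ itself define a complex atlas on $\hat S\setminus f_S^{-1}(\text{ramification values})$; at a ramification point of local degree $k$ one uses the classical fact that, after choosing local coordinates, $f_S$ looks like $z\mapsto z^k$, and the $k$-th root gives a genuine chart. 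Since $f_S$ is orientation preserving, all transition maps are holomorphic, so $\hat S$ acquires a complex analytic structure. Because $f_S$ commutes with the involutions $\sigma_{\hat S}$ and $\sigma_{S^2}$ (complex conjugation on $S^2$), and $\sigma_{S^2}$ is anti-holomorphic, the pulled-back involution $\sigma_{\hat S}$ is anti-holomorphic as well; by the characterization recalled in Section~\ref{r2.2}, this is exactly the data of a Klein surface structure on $S$.

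Next I would check that the resulting dianalytic foam is \emph{normal} and actually a \emph{Klein foam}, i.e. admits an everywhere locally non-constant dianalytic function. For the latter, the map $f_S$ together with $f_\Delta$ is by construction a morphism of topological foams $\Omega\to(D,\partial D,I_D)=\Omega_0$; once we know $f_S$ is a morphism of Klein surfaces (which is precisely what the previous paragraph arranged: it is holomorphic with respect to the new structure and commutes with the involutions), this morphism \emph{is} a dianalytic function on $\Omega$, and it is locally non-constant everywhere since a ramified covering has this property. So $\Omega$ becomes a Klein foam. Normality requires that the punctured neighbourhood of each vertex of $\Delta$ in $\check\Omega$ be connected; here one invokes the definition of a strongly oriented foam: the conditions guaranteeing a special colouring (a cyclic order on the vertices of each component of $\Delta$ compatible with $\varphi$, and distinct boundary components of each component of $S$ going to distinct components of $\Delta$) force the patches meeting along a seam at a vertex to be arranged in a single cyclic chain, so the link of the vertex is a single circle, hence connected. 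I would spell this local-model argument out at each vertex.

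The main obstacle I anticipate is the compatibility at ramification points lying on $\partial S$, equivalently on the equator of $\hat D$. By Lemma~\ref{l2.1} the map $f_S$ is a homeomorphism on each boundary component, so in fact $f_S$ has \emph{no} ramification on $\partial S$ — all ramification is interior — and moreover $f_S$ commutes with the involutions, so ramification points come in $\sigma$-orbits of size two sitting over interior points of $D$. This is what makes the chart construction go through symmetrically and keeps the involution anti-holomorphic; I would verify that the local model $z\mapsto z^k$ can be chosen $\sigma$-equivariantly, which is routine once one knows the two branch points of a $\sigma$-orbit are interchanged by $\sigma$ and lie over conjugate points. A secondary point to be careful about is that the complex structure must descend compatibly with the gluing $\varphi$: but $\varphi$ only enters through the graph $\Delta$ and the requirement $\varphi\circ f_S=f_\Delta\circ\varphi$, which holds by the choice in Lemma~\ref{l2.1}, so no additional condition on the complex structure of $S$ is imposed along $\partial S$ beyond what we already have. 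Putting these together yields the dianalytic structure making $\Omega$ a Klein foam, proving the theorem.
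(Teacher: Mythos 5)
Your proof takes essentially the same route as the paper: invoke Lemma~\ref{l2.1} to obtain a topological morphism $(f_S,f_\Delta)$ to the disk foam, then pull back the complex structure along $f_S$ so that $f_S$ becomes a dianalytic map, the anti-holomorphy of the involution being inherited from conjugation on $\hat D=S^2$. The paper delegates the pullback step to the reference \cite{Ker} and leaves normality and local non-constancy implicit, whereas you spell these out; the argument is correct.
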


\begin{proof}
Consider a strongly oriented foam $\Omega=(S,\Delta,\varphi)$. From lemma
\ref{l2.1}, it follows that there exists a topological morphism $(f_{S},f_{\Delta})$ of $\Omega$ to the dianalytic disk $\Omega_{0}=(D,\partial
D,I_{D})$. Therefore there exists a (unique) dianalytic structure on $S$ such
that $f_{S}$ is a dianalytic map \cite{Ker}.
\end{proof}


\section{Moduli spaces}

\label{r3}

\subsection{Moduli of Klein surfaces}

\label{r3.1} In this and the next subsections we assume all surfaces to be
connected and to have finitely generated fundamental groups. The fundamental
group $\check{\gamma}_{+}$ of an orientable surface $K_{+}$ has a co-presentation

{\ }

$\left\langle
\begin{array}
[c]{c}
a_{1},b_{1},...,a_{g_{+}},b_{g_{+}},h_{1},...,h_{m},x_{1},...,x_{r}, e_{1},...,e_{k},c_{i1},...,c_{ib_{i}},i=1,...,k:\\
c_{ij}^{2}=1,
{\textstyle\prod}
[a_{j},b_{j}]
{\textstyle\prod}
x_{j}h_{j}
{\textstyle\prod}
e_{j}=1
\end{array}
\right\rangle $.

{\ }

\noindent The fundamental group $\check{\gamma}_{-}$ of a non-orientable
surface $K_{-}$ has a co-presentation

{\ }

$\left\langle
\begin{array}
[c]{c}
d_{1},...,d_{g_{-}},h_{1},...,h_{m},x_{1},...,x_{r},e_{1},...,e_{k}
,c_{i1},...,c_{ib_{i}},i=1,...,k:\\
c_{ij}^{2}=1,
{\textstyle\prod}
d_{j}^{2}
{\textstyle\prod}
h_{j}
{\textstyle\prod}
x_{j}
{\textstyle\prod}
e_{j}=1
\end{array}
\right\rangle $.

{\ }

The collection $\check{t}=({\varepsilon\in\{+,-\},g_{\varepsilon},m,r,k,b_{1},...,b_{k}})$ is \textit{the topological type} of the surface $K$.
Here $g$ is the (geometric) genus of $K$; $m$ is the number of holes; $r$ is
the number of interior punctures; $k$ is the number of boundary components
and $b_{i}, i=1,2, \ldots, k$, is the number of punctures on the boundary
component with the number $i$.

Any Klein surface $\check{K}$ is isomorphic to $\check{K}^{\psi}=\Lambda
/\psi(\check{\gamma})$, where $\check{\gamma}\in\{\check{\gamma}_{+},\check{\gamma}_{-}\}$, $U=D\setminus\partial D$ and $\psi:\check{\gamma
}\rightarrow\mathop{\sf Aut}\nolimits(U)$ is a monomorphism to the group
$Aut(U)$ of dianalytic automorphisms of $U$ \cite{BEGG}. Moreover the
monomorphism $\psi$ satisfies the following conditions:
\begin{itemize}
\item $\psi(c_{ij})$ and $\psi(d_{j})$ are antiholomorphic and images of all
other generators are holomorphic;
\item the automorphisms $\psi(x_{i})$ are parabolic and the automorphisms
$\psi(a_{i})$, $\psi(b_{i})$, $\psi(c_{i})$, $\psi(h_{i})$, $\psi(d^{2}_{i})$
are hyperbolic;
\item invariant curves of the hyperbolic and parabolic automorphisms satisfy
some geometric properties.
\end{itemize}

These conditions imply that the group $\psi(\check{\gamma})$ is discrete.

Monomorphisms $\psi$ satisfying these conditions will be called
\textit{admissible}.
An admissible monomorphism $\psi$ always generates a discrete subgroup of
$\mathop{\sf Aut}\nolimits(U)$ and thus a Klein surface $K^{\psi}$. The Klein
surfaces $K^{\psi^{\prime}}$ and $K^{\psi^{\prime\prime}}$ are isomorphic if
and only if the groups $\psi^{\prime}(\check{\gamma})$ and $\psi^{\prime
\prime}(\check{\gamma})$ are conjugate in the group
$\mathop{\sf Aut}\nolimits(U)$, i.e. $\psi^{\prime}(\check{\gamma})=A\psi^{\prime\prime}(\check{\gamma})A^{-1}$
for an automorphism $A\in\mathop{\sf Aut}\nolimits(U)$.

The group of homotopy classes of autohomeomorphisms of $K^{\psi}$ is naturally
isomorphic to the group $\mathsf{Mod}_{\check{t}}=\widehat{\mathsf{Mod}}(\check{\gamma})/\mathop{\sf Aut}\nolimits_{0}(\check{\gamma})$, where
$\widehat{\mathsf{Mod}}(\check{\gamma})$ is a subgroup of
$\mathop{\sf Aut}\nolimits(\check{\gamma})$ and $\mathop{\sf Aut}\nolimits_{0} (\check{\gamma})\subset\widehat{\mathsf{Mod}}(\check{\gamma})$ is the subgroup
of interior automorphisms of $\check{\gamma}$.

Therefore the moduli space $M_{\check{t}}$ of Klein surfaces of topological
type $\check{t}$ is $T_{\check{t}}/\mathsf{Mod}_{\check{t}}$, where
$T_{\check{t}}$ is the set of conjugacy classes (with respect to
$\mathop{\sf
Aut}\nolimits(U)$) of admissible monomorphisms and $\mathsf{Mod}_{\check{t}}$
acts discretely. A description of geometric properties of admissible
monomorphisms in term of coordinates on $\mathop{\sf Aut}\nolimits(U)$ gives a
homeomorphism $T_{\check{t}}\leftrightarrow\mathbb{R}^{6g_{\varepsilon
}+3m+3k+2r+b_{1}+...+b_{k}-6}$.

For compact Klein surfaces $(r=b_{1}=...=b_{k}=0)$ and for oriented surfaces
without boundary $(\varepsilon=+,k=0)$ all these facts are proved in
\cite{N2,N4,N3}. A proof for an arbitrary topological type is obtained by
simple modifications of \cite{N2,N4,N3}. Thus we have

\begin{theorem}
\label{t3.1} The space of Klein surfaces of any given topological type is
connected and homeomorphic to $\mathbb{R}^{n}/\mathsf{Mod}$ where
$\mathsf{Mod} $ is a discrete group.
\end{theorem}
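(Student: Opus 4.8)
The plan is to assemble the ingredients already set up above. By the uniformization of Klein surfaces \cite{BEGG}, every Klein surface of topological type $\check t$ is isomorphic to $K^{\psi}=U/\psi(\check\gamma)$ for some admissible monomorphism $\psi:\check\gamma\to\Aut(U)$ (with $\check\gamma=\check\gamma_{+}$ or $\check\gamma_{-}$ according to $\varepsilon$), and $K^{\psi'}$ is isomorphic to $K^{\psi''}$ precisely when $\psi'(\check\gamma)$ and $\psi''(\check\gamma)$ are conjugate in $\Aut(U)$. Hence the moduli space is $M_{\check t}=T_{\check t}/\mathsf{Mod}_{\check t}$, where $T_{\check t}$ is the set of $\Aut(U)$-conjugacy classes of admissible monomorphisms and $\mathsf{Mod}_{\check t}=\widehat{\mathsf{Mod}}(\check\gamma)/\Aut_{0}(\check\gamma)$ acts on it in the natural way.

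The first main step is to realize $T_{\check t}$ as $\mathbb{R}^{n}$. I would fix a geometric model: a ``symmetric'' pants decomposition of the complex double, compatible with the anti-holomorphic involution and with the chosen co-presentation of $\check\gamma$, cutting it into three-holed spheres; then introduce Fenchel--Nielsen type coordinates (hyperbolic lengths of the cutting geodesics together with the twist parameters), supplemented by the length data attached to the holes $h_{i}$, one real parameter attached to each interior puncture $x_{i}$ and each boundary puncture $c_{ij}$, and the parameters recording the positions of the invariant axes of the antiholomorphic generators $c_{ij}$ and $d_{j}$. These numbers depend real-analytically on $\psi$, are invariant under $\Aut(U)$-conjugation, and conversely determine $\psi$ up to conjugation real-analytically. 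The admissibility conditions (the prescribed types of the generators, parabolicity of the $\psi(x_{i})$, and the geometric conditions on the invariant curves) translate into positivity of the lengths, with the twists unrestricted, cutting out exactly a region diffeomorphic to $\mathbb{R}^{n}$ with $n=6g_{\varepsilon}+3m+3k+2r+b_{1}+\dots+b_{k}-6$. This is the homeomorphism $T_{\check t}\leftrightarrow\mathbb{R}^{n}$ quoted above; for $r=b_{1}=\dots=b_{k}=0$ and for $\varepsilon=+$, $k=0$ it is proved in \cite{N2,N4,N3}, and the general case follows by the same argument with the extra coordinates inserted.

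Next I would check that $\mathsf{Mod}_{\check t}$ is a discrete group acting on $T_{\check t}\cong\mathbb{R}^{n}$: it is a subgroup of the countable mapping class group of the underlying topological surface, it acts by real-analytic diffeomorphisms in the above coordinates, and, again following \cite{N2,N4,N3}, it acts properly discontinuously. Finally $M_{\check t}=T_{\check t}/\mathsf{Mod}_{\check t}\cong\mathbb{R}^{n}/\mathsf{Mod}_{\check t}$, which is connected because it is a continuous image of the connected space $\mathbb{R}^{n}$. I expect the real obstacle to be the first step for an arbitrary topological type: one must set up the symmetric pants decomposition compatibly with both the anti-holomorphic involution and the given co-presentation of $\check\gamma$, and verify that the Fenchel--Nielsen coordinates together with the auxiliary coordinates for holes, punctures and the axes of the antiholomorphic generators form a single global real-analytic chart onto $\mathbb{R}^{n}$. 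Everything else is either quoted verbatim from \cite{N2,N4,N3} or is a formal consequence of the quotient presentation.
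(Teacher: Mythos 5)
Your proposal is correct and follows the same overall strategy as the paper: uniformize via \cite{BEGG} to get $M_{\check t}=T_{\check t}/\mathsf{Mod}_{\check t}$, identify $T_{\check t}$ with $\mathbb{R}^{6g_{\varepsilon}+3m+3k+2r+b_1+\dots+b_k-6}$, observe that $\mathsf{Mod}_{\check t}$ acts discretely, and deduce connectedness from the quotient presentation. The one place where you genuinely diverge is the key step $T_{\check t}\cong\mathbb{R}^n$: you propose Fenchel--Nielsen coordinates attached to a symmetric pants decomposition of the double, whereas the paper (following \cite{N2,N4,N3}) parametrizes the admissible monomorphism directly by ``coordinates on $\Aut(U)$'', i.e.\ by the fixed points and shift parameters of the images of the chosen generators, with the admissibility conditions cutting out a cell. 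Both routes are standard and both ultimately defer the hard verification to the same references; the paper's choice has the advantage that admissibility is stated intrinsically in terms of the generators, so the extra data for punctures and for the antiholomorphic generators $c_{ij}$, $d_j$ is already built into the parametrization, while your route requires the additional (and nontrivial, as you yourself flag) work of constructing a pants decomposition compatible with the involution and with the given co-presentation and of checking that the auxiliary coordinates for holes, punctures and axes assemble into a single global chart. Since the paper itself offers no more than a citation plus ``simple modifications'' for the general topological type, your proposal is at least as complete as the original argument.
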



\subsection{Spaces of morphisms of Klein surfaces}

\label{r3.2} Two morphisms between Klein surfaces
$f^{\prime}:S^{\prime }\rightarrow K^{\prime}$ and
$f^{\prime\prime}:S^{\prime\prime}\rightarrow K^{\prime\prime}$
are called \textit{isomorphic} (respectively \textit{topologically
equivalent}), if there exist isomorphisms (respectively
homeomorphisms) $f_{S}:S^{\prime}\rightarrow S^{\prime\prime}$ and
$f_{K}:K^{\prime}\rightarrow K^{\prime\prime}$ such that
$f_{S}f^{\prime }=f^{\prime\prime}f_{K}$. The space of isomorphic
classes of morphisms (with the natural topology) is called
\textit{the moduli space of morphisms}. A
class of topological equivalence of morphisms is called the
\textit{topological type of a morphism} of the class.

Now let us describe a class of morphisms of degree $d$ with target
a compact Klein surface of type $t=({\varepsilon,g,0,k,0,...,0})$.
Let $\check{\gamma}$ be the fundamental group of a Klein surface
of type $\check{t}=({\varepsilon ,g,r,k,b_{1},...,b_{k}})$, and
let $\tilde{\gamma}\subset\check{\gamma}$ be a subgroup of index
$d$. Consider $\psi\in T_{\check{t}}$ and $\check{K}^{\psi
}=U/\psi(\check{\gamma})$,
$\tilde{S}_{\tilde{\gamma}}^{\psi}=U/\psi (\tilde{\gamma})$. The
natural embedding $\psi(\tilde{\gamma})\subset
\psi(\check{\gamma})$ induces a morphism
$\check{f}_{\tilde{\gamma}}^{\psi
}:\check{S}_{\tilde{\gamma}}^{\psi}\rightarrow\check{K}^{\psi}$ of
degree $d$. After patching on $\check{S}_{\tilde{\gamma}}^{\psi}$
and $\check{K}^{\psi}$ the punctures generated by parabolic
shifts, we obtain a morphism
$f_{\tilde{\gamma}}^{\psi}:S_{\tilde{\gamma}}^{\psi}\rightarrow
K^{\psi}$ to a Klein surface of topological type $t$.

One can show that:

\begin{itemize}
\item each non-constant morphism of Klein surfaces is isomorphic to a morphism
of the form $f^{\psi}_{\tilde{\gamma}}$;

\item the morphisms $f^{\psi}_{\tilde{\gamma}}$ and $f^{\hat{\psi}}_{\tilde{\gamma}}$ have the same topological type for any $\psi,\hat{\psi}\in
T_{\check{t}}$; moreover they are isomorphic if and only if $A\psi
(z)A^{-1}=\hat{\psi}(z)$ for an automorphism $A\in
\mathop{\sf Aut}\nolimits(U)$ and any $z\in\check{\gamma}$;

\item the morphism $f^{\psi}_{\tilde{\gamma}}$ and $f^{\psi}_{\hat{\gamma}}$
have the same topological type if and only if $\hat{\gamma}=\alpha
(\tilde{\gamma})$, where $\alpha\in\widehat{\mathsf{Mod}}_{\check{t}}$.
\end{itemize}

For orientable Riemann surfaces $(\varepsilon=+, k=0)$ all these
facts are proved in \cite[section 1.6]{N3}. A proof for an
arbitrary topological type is obtained by simple modifications of
\cite{N3}. These arguments and Theorem \ref{t3.1} give

\begin{theorem}
\label{t3.2} The space of morphisms of Klein surfaces of any given
topological type is connected and homeomorphic to
$\mathbb{R}^{n}/\mathsf{Mod}$, where $\mathsf{Mod}$ is a discrete
group.
\end{theorem}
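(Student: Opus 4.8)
The plan is to combine the explicit uniformization picture of \S\ref{r3.2} with Theorem~\ref{t3.1}, exactly as the sentence preceding the statement suggests. Up to isomorphism, a non-constant morphism of Klein surfaces of the prescribed topological type is one of the morphisms $f^{\psi}_{\tilde\gamma}:S^{\psi}_{\tilde\gamma}\to K^{\psi}$ constructed from an admissible monomorphism $\psi\in T_{\check t}$ and an index-$d$ subgroup $\tilde\gamma\subset\check\gamma$, and its topological type depends only on the $\widehat{\mathsf{Mod}}_{\check t}$-orbit of $\tilde\gamma$, not on $\psi$. So I would first fix a reference subgroup $\tilde\gamma_{0}$ in the relevant orbit (a fixed topological type corresponds to a single such orbit, which is what will give connectedness) and then realize the moduli space of morphisms of the given type as a quotient of $T_{\check t}$. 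Constant morphisms of a given type are a separate, easier piece that reduces directly to Theorem~\ref{t3.1}, so the argument may be carried out for non-constant morphisms.

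Concretely, let $\mathsf{Mod}$ be the stabilizer of $\tilde\gamma_{0}$ inside $\mathsf{Mod}_{\check t}=\widehat{\mathsf{Mod}}(\check\gamma)/\Aut_{0}(\check\gamma)$. Using the two ``isomorphic iff conjugate'' criteria recalled in \S\ref{r3.2}: every $\tilde\gamma$ in the orbit is carried to $\tilde\gamma_{0}$ by an element of $\widehat{\mathsf{Mod}}_{\check t}$, and once $\tilde\gamma=\tilde\gamma_{0}$ is fixed the only remaining identifications among the pairs $(\psi,\tilde\gamma_{0})$ are conjugations $\psi\mapsto A\psi A^{-1}$ by $A\in\Aut(U)$ that induce an element of $\mathsf{Mod}$; since $T_{\check t}$ is already the set of $\Aut(U)$-conjugacy classes of admissible monomorphisms, this identifies the moduli space of morphisms of the given type with $T_{\check t}/\mathsf{Mod}$. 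I would then invoke the homeomorphism $T_{\check t}\leftrightarrow\mathbb{R}^{6g_{\varepsilon}+3m+3k+2r+b_{1}+\dots+b_{k}-6}$ from \S\ref{r3.1} and the fact, established there, that $\mathsf{Mod}_{\check t}$ acts discretely on $T_{\check t}$; its subgroup $\mathsf{Mod}$ therefore also acts discretely (properly discontinuously). This gives the presentation $\mathbb{R}^{n}/\mathsf{Mod}$ with $\mathsf{Mod}$ discrete, and connectedness of the quotient is immediate from connectedness of $\mathbb{R}^{n}$.

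The routine but essential input is the verification of the three bulleted facts of \S\ref{r3.2} for an arbitrary topological type: that every non-constant morphism is isomorphic to some $f^{\psi}_{\tilde\gamma}$, and the two criteria describing when two such morphisms are isomorphic, respectively topologically equivalent. For orientable Riemann surfaces this is \cite[\S1.6]{N3}; for the general case (non-orientable source or target, boundary, interior and boundary punctures) it follows by the same Fuchsian/NEC-group arguments used in \S\ref{r3.1}, keeping track of the antiholomorphic generators $\psi(c_{ij})$, $\psi(d_{j})$ and of the patching of the parabolic punctures. The main obstacle I anticipate is bookkeeping rather than conceptual: one must check that restricting an admissible $\psi$ to a finite-index subgroup $\tilde\gamma\subset\check\gamma$ again yields an admissible monomorphism (so $S^{\psi}_{\tilde\gamma}$ is genuinely a Klein surface, with punctures precisely at the parabolic fixed points to be patched), and that the stabilizer $\mathsf{Mod}$ of $\tilde\gamma_{0}$ acts not merely discretely but properly discontinuously on $T_{\check t}$, so that the quotient inherits an analytic-space structure of the stated form; both reduce to the corresponding properties of $\mathsf{Mod}_{\check t}$ acting on $T_{\check t}$ from \S\ref{r3.1}.
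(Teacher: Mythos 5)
Your proposal follows essentially the same route as the paper: the paper's own proof is precisely the combination of the three bulleted facts of \S\ref{r3.2} (cited from \cite{N3} and its modifications) with Theorem~\ref{t3.1}, and your identification of the moduli space with $T_{\check t}/\mathsf{Mod}$ for $\mathsf{Mod}$ the stabilizer of a reference subgroup $\tilde\gamma_{0}$ in $\mathsf{Mod}_{\check t}$ is the natural way of making that one-sentence argument explicit. Your added remarks on admissibility of the restriction to $\tilde\gamma$ and on proper discontinuity of the stabilizer are correct supplementary details, not a departure from the paper's approach.
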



\subsection{Construction of Klein foams}

\label{r3.3} Now we shall apply the previous section to connected Klein foams.
We shall assume that the type $t$ of $\check{\gamma}$ is such that
$k,b_{1},...,b_{k}>0$.

Let $\tilde{\gamma}^{1},...,\tilde{\gamma}^{n}\subset\check{\gamma}$ be
subgroups of finite indices. Consider conjugate classes $C_{i,j}=\cup
_{g\in\check{\gamma}}gc_{i,j}g^{-1}$ and put $C_{i,j}^{l}=C_{ij}\cap
\tilde{\gamma}^{l}$. The group of interior automorphisms of $\tilde{\gamma
}^{l}$ splits $C_{i,j}^{l}$ into orbits $C_{i,j}^{l,1},...,C_{i,j}^{l,m_{l}}$.
A set $(C_{i,j}^{l^{1},m^{1}},...,C_{i,j}^{l^{s},m^{s}})$ is called \textit{an
edge} of $(\check{\gamma};\tilde{\gamma}^{1},...,\tilde{\gamma}^{n})$, if
$l^{v}\neq l^{w}$ for $v\neq w$. \textit{A foam system} is a collection
$\Gamma=\{\check{\gamma};\tilde{\gamma}^{1},...,\tilde{\gamma}^{n}; H\}$,
where $H$ is a set of mutually disjoint edges of $(\check{\gamma}; \tilde{\gamma}^{1},...,\tilde{\gamma}^{n})$ that contains all $C_{i,j}^{l}$.

Let $\psi\in T_{\check{t}}$. From subsection \ref{r3.2}, it follows that the
subgroups $\tilde{\gamma}^{1},...,\tilde{\gamma}^{n}$ generate a morphism
$f^{\psi}:S^{\psi}\rightarrow K^{\psi}$, where $S^{\psi}=S_{\tilde{\gamma}^{1}}^{\psi}\coprod...\coprod S_{\tilde{\gamma}^{n}}^{\psi}$. The
parabolic points of $\check{\gamma}$ divide the boundary of $S$
into segments bijectively corresponding to sets
$C_{i,j}^{l}\subset C_{i,j}$. Any element from $C_{i,j}$ is the
reflection in a straight line $l$ in the Poincar\'{e} model of
Lobachevsky geometry on $U$. The group $\check{\gamma}$ acts
transitivity on the set $\{l\}$ of these lines. The line
corresponding to $C_{i,j}^{l}\subset C_{i,j}$ forms (under the
natural projection) a segment on $\partial S^{l}$. Let
$(C_{i,j}^{l^{1},m^{1}},...,C_{i,j}^{l^{s},m^{s}})\in H$ be an
edge of $H$. Let us glue the corresponding segments of $\partial
S^{l^{1}}$, ..., $\partial S^{l^{s}}$ by the action
$\check{\gamma}$ on $\{l\}$. The continuation of the gluing to the
closures of the segments gives a graph $\Delta$ and a map
$\varphi:\partial S\rightarrow\Delta$. Let
$\Omega_{\Gamma}^{\psi}=(S,\Delta,\varphi)$.

\begin{lemma}
The triple $\Omega^{\psi}_{\Gamma}$ is a Klein foam.
\end{lemma}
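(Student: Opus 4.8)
The plan is to verify the three defining conditions of a Klein foam for the triple $\Omega_\Gamma^\psi=(S,\Delta,\varphi)$: first that it is a topological foam, second that it is normal, and third that it carries an everywhere locally non-constant dianalytic function. For the topological-foam axioms, note that each $S_{\tilde\gamma^l}^\psi=U/\psi(\tilde\gamma^l)$, after patching the parabolic punctures, is a compact Klein surface, hence $S=\coprod_l S_{\tilde\gamma^l}^\psi$ is a compact (possibly disconnected, possibly non-orientable) surface with boundary consisting of disjoint circles; the boundary circles come from the reflection lines in the $C_{i,j}^l$. The gluing map $\varphi$ is built edge by edge from the action of $\check\gamma$ on the set $\{l\}$ of axes: on each boundary segment $\varphi$ is by construction a homeomorphism onto its image, and since the endpoints (parabolic points) are glued consistently, $\varphi$ extends to a homeomorphism on each boundary circle onto a circle in $\Delta$. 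Axiom (c) — that each connected component of $S$ meets at most one component of $\varphi^{-1}(l\setminus\partial l)$ for each edge $l$ — follows from the requirement in the definition of an edge that the superscripts $l^v$ are pairwise distinct, so no single $S_{\tilde\gamma^{l^v}}^\psi$ contributes two segments to the same edge; and that $H$ consists of \emph{mutually disjoint} edges containing all $C_{i,j}^l$ guarantees $\varphi$ is well-defined and surjective onto $\Delta$.

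Next I would establish normality. A vertex $v$ of $\Delta$ is the image of parabolic points of $\check\gamma$; I must show its punctured neighbourhood in $\check\Omega$ is connected. The local picture near a parabolic fixed point of $\check\gamma$ on $U/\psi(\check\gamma)$ is a neighbourhood of a boundary puncture, and the several surfaces $S_{\tilde\gamma^l}^\psi$ each wind around $v$ a certain number of times corresponding to the local degree of $f^\psi$; since all the relevant boundary segments meeting at $v$ are glued together through the $\check\gamma$-action on axes, going around $v$ one passes cyclically through all these half-disk sectors, so the punctured neighbourhood is a single connected (possibly branched) surface. The key point making this work is that the edges in $H$ exhaust all the $C_{i,j}^l$, so every boundary segment adjacent to $v$ is actually glued to something, leaving no "free" boundary near $v$ that would disconnect the punctured neighbourhood.

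Finally, for the dianalytic structure and the non-constant dianalytic function: each $S_{\tilde\gamma^l}^\psi$ inherits a Klein surface structure from $U$ via the covering $U/\psi(\tilde\gamma^l)\to U/\psi(\check\gamma)=K^\psi$, and the morphism $f^\psi:S^\psi\to K^\psi$ is dianalytic by construction (subsection~\ref{r3.2}). The gluings defining $\varphi$ are realised by restrictions of elements of $\mathop{\sf Aut}\nolimits(U)$ (reflections in axes), hence are dianalytic identifications, so the complex structures on the patches are concordant. Composing $f^\psi$ with a dianalytic function on the compact Klein surface $K^\psi$ that is locally non-constant — which exists since $K^\psi$ is a compact Klein surface and by Theorem~\ref{t3.1}/the theory of such surfaces admits non-constant dianalytic maps to $\Omega_0$ — gives an everywhere locally non-constant dianalytic function $f_0$ on $\Omega_\Gamma^\psi$, because $f^\psi$ is itself locally non-constant (it is a ramified covering). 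Hence all the axioms of a Klein foam are satisfied. I expect the main obstacle to be the normality verification: carefully matching the local combinatorics at a vertex $v$ (how many sectors from each $S_{\tilde\gamma^l}^\psi$, and in what cyclic order the edge-gluing threads them together) against the requirement that $H$ contains all $C_{i,j}^l$, and thereby showing no disconnection can occur — this is where the precise bookkeeping of orbits $C_{i,j}^{l,1},\dots,C_{i,j}^{l,m_l}$ under interior automorphisms really enters.
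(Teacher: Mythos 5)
Your proof is correct and follows essentially the same route as the paper: the topological-foam axioms are checked from the construction together with the distinctness requirement $l^{v}\neq l^{w}$ in the definition of an edge and the fact that the natural projection is a homeomorphism on each line from $\{l\}$, and the everywhere locally non-constant dianalytic function is obtained by composing $f^{\psi}$ with a morphism $K^{\psi}\to D$ (for which the correct reference is Alling--Greenleaf \cite{AG}, not Theorem \ref{t3.1}, which concerns moduli rather than existence of morphisms to the disk). You are in fact more thorough than the paper on one point: the paper's proof never verifies normality, which its own definition of a dianalytic foam requires, whereas you address the connectivity of punctured neighbourhoods of vertices explicitly and correctly identify it as the delicate step.
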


\begin{proof}
First let us prove that $(S,\Delta,\varphi)$ is a topological foam. The first
two conditions and the point a) of the last condition follow directly from the
construction. The points b) and c) follow from the definition of an edge of
$(\check{\gamma};\tilde{\gamma}^{1},...,\tilde{\gamma}^{n})$ (the requirement
that $l^{v}\neq l^{w}$ for $v\neq w$) and the fact that the restriction of the
natural projection to any line from $\{l\}$ is a homeomorphism. From
subsection \ref{r3.2}, it follows that $f^{\psi}$ induces a morphism of Klein
foams $f:S\rightarrow K^{\psi}$. Moreover, the Klein surface $K^{\psi}$ admits
a morphism to the unit disk \cite{AG}.
\end{proof}

Now we shall construct a class of foam systems corresponding to any connected
Klein foam $\Omega=(S,\Delta,\varphi)$. Let us consider the Klein surface
$K=K_{\Omega}\in M_{t}$ and the canonical morphism $\phi_{\Omega}:
\Omega\rightarrow K$ from theorem \ref{t2.1}. Let $B\subset K$ be the set of
critical values of $\phi=\phi_{\Omega}$ and $\check{K}=K\setminus B\in
M_{\check t}$. Let $\check{S}:=\phi^{-1}(\check{K})$.

From subsection \ref{r3.1}, it follows that $\check{K}=\Lambda/\psi
(\check{\gamma})$ for $\psi\in T_{\check{t}}$, where $\check{t}$ is the type
of $\check{\gamma}$. From subsection \ref{r3.2}, it follows that the
restrictions of $\phi$ to connected components of $\check{S}$ are isomorphic
to morphisms generated by a family of subgroups $\tilde{\gamma}^{1}, \ldots, \tilde{\gamma}^{n}\subset\check{\gamma}$. The edges of $\Delta$ generate
the family of edges $H$ of $(\check{\gamma},\tilde{\gamma}^{1},...,\tilde
{\gamma}^{n})$ and thus a foam system $\Gamma=\{\check{\gamma};\tilde{\gamma
}^{1},...,\tilde{\gamma}^{n}; H\}$. From subsection \ref{r3.3}, it follows
that the Klein foams $\Omega^{\psi}_{[\Gamma]}$ and $\Omega$ are isomorphic.
Thus we have proved

\begin{lemma}
For any Klein foam $\Omega$, there exists a foam system $\Gamma$ and $\psi\in
T_{\check{t}}$ such that $\Omega$ and $\Omega^{\psi}_{\Gamma}$ are isomorphic.
\end{lemma}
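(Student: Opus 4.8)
The plan is to reverse the construction of subsection~\ref{r3.3}: given a Klein foam $\Omega=(S,\Delta,\varphi)$, I produce a foam system $\Gamma$ and a point $\psi\in T_{\check t}$ from the canonical morphism $\phi_\Omega$ supplied by Theorem~\ref{t2.1}, and then check that the Klein foam $\Omega^\psi_{\Gamma}$ rebuilt from this data is isomorphic to $\Omega$. First I would pass to the Klein surface $K=K_\Omega$ and the canonical morphism $\phi=\phi_\Omega\colon\Omega\to K$ of Theorem~\ref{t2.1}, let $B\subset K$ be the (finite) set of critical values of $\phi$ and set $\check K=K\setminus B$, $\check S=\phi^{-1}(\check K)$. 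By Theorem~\ref{t3.1}, $\check K$ is uniformized as $U/\psi(\check\gamma)$ for some admissible $\psi\in T_{\check t}$, where $\check\gamma$ has the topological type $\check t$ of $\check K$; note $\check t$ does have $k,b_1,\dots,b_k>0$ since the boundary of $S$ is nonempty and is cut into segments by the patching of the punctures. By Theorem~\ref{t3.2}, each connected component of the restriction $\phi|_{\check S}$ is isomorphic to a morphism $f^\psi_{\tilde\gamma^{j}}$ for a subgroup $\tilde\gamma^{j}\subset\check\gamma$ of finite index, giving the family $\tilde\gamma^1,\dots,\tilde\gamma^n$.

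Next I would recover the combinatorial edge data. The reflections $c_{ij}$ of $\check\gamma$ correspond to the lines $l$ in the Poincar\'e model whose images cut $\partial\check S$ into segments, and each edge of $\Delta$ is obtained by gluing a collection of such segments lying on distinct components $S_{\tilde\gamma^{l^1}}^\psi,\dots,S_{\tilde\gamma^{l^s}}^\psi$; the orbit decomposition of $C_{ij}^l$ under interior automorphisms of $\tilde\gamma^l$ records exactly which segment of which patch is being glued. Reading off, for each edge of $\Delta$, the tuple $(C_{ij}^{l^1,m^1},\dots,C_{ij}^{l^s,m^s})$ — the condition $l^v\neq l^w$ holding precisely because, in a topological foam, each connected component of $S$ meets an edge in at most one component of $\varphi^{-1}(l\setminus\partial l)$ — produces a set $H$ of mutually disjoint edges covering all the $C_{ij}^l$, hence a foam system $\Gamma=\{\check\gamma;\tilde\gamma^1,\dots,\tilde\gamma^n;H\}$.

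Finally I would verify $\Omega^\psi_{\Gamma}\cong\Omega$. By construction $S^\psi=\coprod_l S_{\tilde\gamma^l}^\psi$ carries, after patching the parabolic punctures, a canonical isomorphism of Klein surfaces to $S$ (this is where Theorem~\ref{t3.2} and its isomorphism criterion are used: the chosen $\psi$ realizes exactly the conformal structure of $\check S$, and patching punctures is functorial). Under this isomorphism the segments of $\partial S^\psi$ indexed by the $C_{ij}^l$ correspond to the segments of $\partial S$ cut out by $\varphi$, and the gluing prescribed by the edges in $H$ is by definition the gluing that produces $\varphi$; so $(S^\psi,\Delta^\psi,\varphi^\psi)=\Omega^\psi_{\Gamma}$ is identified with $(S,\Delta,\varphi)=\Omega$ as a dianalytic foam, and the identification commutes with the dianalytic functions, i.e.\ is an isomorphism of Klein foams. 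The main obstacle is the middle step: one must check that the interior-automorphism orbits $C_{ij}^{l,m}$ really are in bijection with the segments of $\partial S^l$ and that distinct segments glued along a single edge of $\Delta$ land in distinct $\tilde\gamma^l$, so that the resulting $H$ genuinely consists of \emph{edges} in the sense of subsection~\ref{r3.3}; this is a bookkeeping argument comparing the boundary combinatorics of $U/\psi(\tilde\gamma^l)$ with the monodromy of $\phi$, and it is where normality of $\Omega$ and condition~c) in the definition of a topological foam enter.
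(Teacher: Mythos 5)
Your proposal is correct and follows essentially the same route as the paper: pass to the canonical morphism $\phi_\Omega\colon\Omega\to K_\Omega$ of Theorem~\ref{t2.1}, delete the critical values to get $\check K=U/\psi(\check\gamma)$, identify the restrictions of $\phi_\Omega$ to the components of $\check S$ with morphisms $f^\psi_{\tilde\gamma^j}$ via subsection~\ref{r3.2}, read off $H$ from the edges of $\Delta$, and invoke the construction of subsection~\ref{r3.3} to conclude $\Omega\cong\Omega^\psi_\Gamma$. Your version is in fact somewhat more explicit than the paper's (which compresses the final verification into a single sentence), particularly in flagging where condition~c) of the foam definition guarantees $l^v\neq l^w$.
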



\subsection{Moduli of Klein foams}

\label{r3.4} Let $\Omega^{\prime}$ and $\Omega^{\prime\prime}$ be Klein foams
with canonical morphisms $\phi_{\Omega^{\prime}}: \Omega^{\prime} \rightarrow\Omega^{\prime}_{K}$ and $\phi_{\Omega^{\prime\prime}}:
\Omega^{\prime\prime}\rightarrow\Omega_{K}^{\prime\prime}$. We say that
$\Omega^{\prime}$ and $\Omega^{\prime\prime}$ have \textit{the same
topological type}, if there exists an isomorphism $(f_{S}, f_{\Delta})$ of the
topological foams, $f_{S}: \widetilde{S}^{\prime}\to\widetilde{S}^{\prime\prime}$, $f_{\Delta}:\Delta^{\prime}\to\Delta^{\prime\prime}$, and a
homeomorphism $f_{K}: \widetilde{K}^{\prime}\to\widetilde{K}^{\prime\prime} $
such that $\phi_{\Omega^{\prime\prime}}f_{S}=f_{K}\phi_{\Omega^{\prime}}$.

The space of isomorphic classes of Klein foams (with the natural topology) is
called the \textit{moduli space of Klein foams}. A
class of topological equivalence of morphisms is called a
\textit{topological type of a Klein foam}.

An element $(\alpha^{1},...,\alpha^{n})\in\widehat{\mathsf{Mod}}(\check
{\gamma}) ^{\otimes n}$ acts on the set $\{\Gamma\}$ of foam systems sending
$\tilde{\gamma}^{l}$ to $\alpha^{l}(\tilde{\gamma}^{l})$ and $C_{i,j}^{l}$ to
$\alpha^{i}(C_{i,j}^{l})$ $(l=1,...,l)$. The orbit $[\Gamma]$ of a foam system
$\Gamma$ under the action of $\widehat{\mathsf{Mod}}(\check{\gamma})^{\otimes
n}$ is called the \textit{class of $\Gamma$}. The subsections \ref{r3.1} and
\ref{r3.2} imply the following statement.

\begin{lemma}
The Klein foams $\Omega^{\psi}_{\Gamma}$ and
$\Omega^{\hat{\psi}}_{\Gamma}$ have the same topological type for
any $\psi,\hat{\psi}\in T_{\check{t}}$. Moreover they are
isomorphic if $A\psi(z)A^{-1}=\hat{\psi}(z)$ for an automorphism
$A\in\mathop{\sf Aut}\nolimits(U)$ and any $z\in\check{\gamma}$.
The morphisms $f^{\psi}_{\tilde{\gamma}}$ and
$f^{\psi}_{\hat{\gamma}}$ have
the same topological type if and only if $\hat{\gamma}=\alpha(\tilde{\gamma})$, where $\alpha\in\widehat{\mathsf{Mod}}_{\check{t}}$.
\end{lemma}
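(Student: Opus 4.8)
The plan is to reduce everything to the corresponding facts about morphisms of Klein surfaces, which are already available from subsections~\ref{r3.1} and~\ref{r3.2}, and then to take care of the extra combinatorial data given by the gluing graph. Recall that a Klein foam $\Omega^{\psi}_{\Gamma}$ is built from the morphism $f^{\psi}:S^{\psi}\to K^{\psi}$ determined (by subsection~\ref{r3.2}) by the subgroups $\tilde{\gamma}^{1},\dots,\tilde{\gamma}^{n}\subset\check{\gamma}$, together with the gluing of boundary segments of $\partial S^{\psi}$ prescribed by the set $H$ of edges. The point of the argument is that the dianalytic structure on $\Omega^{\psi}_{\Gamma}$ is carried entirely on $S^{\psi}$, hence is governed by $\psi$ alone, while $\Gamma$ (i.e.\ the subgroups and the edge set $H$) is purely topological data.

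First I would establish the statement about topological type. Fix $\psi,\hat{\psi}\in T_{\check{t}}$. By the second bullet of subsection~\ref{r3.2} applied componentwise to $S^{\psi}=S^{\psi}_{\tilde{\gamma}^{1}}\coprod\dots\coprod S^{\psi}_{\tilde{\gamma}^{n}}$, the morphisms $f^{\psi}:S^{\psi}\to K^{\psi}$ and $f^{\hat{\psi}}:S^{\hat{\psi}}\to K^{\hat{\psi}}$ have the same topological type; that is, there are homeomorphisms $f_{S}:\widetilde{S}^{\psi}\to\widetilde{S}^{\hat{\psi}}$ and $f_{K}:\widetilde{K}^{\psi}\to\widetilde{K}^{\hat{\psi}}$ intertwining the two covering maps. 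I would arrange these homeomorphisms to be induced, component by component, by the identity on the abstract groups $\tilde{\gamma}^{l}$ and $\check{\gamma}$ (this is exactly what the construction in subsection~\ref{r3.2} via the embeddings $\psi(\tilde{\gamma}^{l})\subset\psi(\check{\gamma})$ provides), so that the induced map $f_{S}$ on $\partial S^{\psi}$ respects the decomposition of the boundary into the segments corresponding to the sets $C_{i,j}^{l}$, and in fact matches the segment labelled $C_{i,j}^{l}$ on the $\psi$ side with the identically labelled segment on the $\hat{\psi}$ side. Since the gluing data $H$ is the \emph{same} for both foams, $f_{S}$ then descends to a homeomorphism of the glued graphs $f_{\Delta}:\Delta\to\Delta$ with $\varphi^{\hat{\psi}}\circ f_{S}=f_{\Delta}\circ\varphi^{\psi}$, and $(f_{S},f_{\Delta},f_{K})$ exhibits $\Omega^{\psi}_{\Gamma}$ and $\Omega^{\hat{\psi}}_{\Gamma}$ as having the same topological type in the sense of subsection~\ref{r3.4}.

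Next, for the isomorphism claim, suppose $A\in\mathop{\sf Aut}\nolimits(U)$ with $A\psi(z)A^{-1}=\hat{\psi}(z)$ for all $z\in\check{\gamma}$. Then $A$ induces a dianalytic isomorphism $U/\psi(\check{\gamma})\to U/\hat{\psi}(\check{\gamma})$ and, since it conjugates $\psi(\tilde{\gamma}^{l})$ to $\hat{\psi}(\tilde{\gamma}^{l})$, also dianalytic isomorphisms of the covering surfaces $U/\psi(\tilde{\gamma}^{l})\to U/\hat{\psi}(\tilde{\gamma}^{l})$ commuting with the morphisms to the base; this is the isomorphism-criterion part of the second bullet of subsection~\ref{r3.2}. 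After patching the parabolic punctures (which is canonical), $A$ yields an isomorphism of Klein surfaces $S^{\psi}\to S^{\hat{\psi}}$ that again carries the boundary segment $C_{i,j}^{l}$ of $\partial S^{\psi}$ to the segment $C_{i,j}^{l}$ of $\partial S^{\hat{\psi}}$, because $A$ sends the Lobachevsky line $l$ to the line $\hat{\psi}(g)(l)$ compatibly with the $\check{\gamma}$-action used in the gluing. As $H$ is unchanged, this descends to an isomorphism of Klein foams $\Omega^{\psi}_{\Gamma}\to\Omega^{\hat{\psi}}_{\Gamma}$. Finally, the last assertion, that $f^{\psi}_{\tilde{\gamma}}$ and $f^{\psi}_{\hat{\gamma}}$ have the same topological type iff $\hat{\gamma}=\alpha(\tilde{\gamma})$ for some $\alpha\in\widehat{\mathsf{Mod}}_{\check{t}}$, is literally the third bullet of subsection~\ref{r3.2} and requires no further work beyond quoting it.

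The main obstacle I anticipate is the bookkeeping in the first two steps: one must verify that the homeomorphism (resp.\ isomorphism) of the disjoint-union surface $S^{\psi}$ produced by subsection~\ref{r3.2} can be chosen to respect not merely the topological type of each component and the target, but also the precise labelling of the boundary segments by the orbit data $C_{i,j}^{l,m}$ and hence to be compatible with a \emph{fixed} edge set $H$. Concretely, the identification of segments of $\partial S^{l}$ with orbits $C_{i,j}^{l,m}$ comes from the transitive $\check{\gamma}$-action on the set of Lobachevsky lines $\{l\}$, so one needs the chosen homeomorphism to be equivariant for (or at least to intertwine) these actions on the $\psi$ and $\hat{\psi}$ sides; this is automatic when the homeomorphism is induced by the identity on the abstract group $\check{\gamma}$, so the real task is to check that the constructions in subsection~\ref{r3.2} do deliver such a group-level identity and that the subsequent quotient by $H$ and the continuation to segment closures are functorial. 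Once that compatibility is in hand, the descent to $(\Delta,\varphi)$ and the verification of the intertwining relation $\phi_{\Omega^{\hat{\psi}}_{\Gamma}}\circ f_{S}=f_{K}\circ\phi_{\Omega^{\psi}_{\Gamma}}$ are routine.
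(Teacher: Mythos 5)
Your proposal is correct and follows exactly the route the paper intends: the paper offers no explicit proof, stating only that the lemma follows from subsections~\ref{r3.1} and~\ref{r3.2}, and your argument is precisely the reduction to the bullet points of subsection~\ref{r3.2} together with the (correct) observation that the marking-induced homeomorphisms and the conjugating automorphism $A$ respect the labelling of boundary segments by the orbits $C_{i,j}^{l,m}$ and hence descend through the fixed gluing data $H$. The bookkeeping you flag as the main obstacle is exactly the content the paper leaves implicit, and your treatment of it is sound.
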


Thus the topological type of $\Omega^{\psi}_{\Gamma}$ is defined by $[\Gamma]
$. Let $\mathsf{Mod}_{[\Gamma]}:=\widehat{\mathsf{Mod}}(\check{\gamma
})^{\otimes n}/\mathop{\sf Aut}\nolimits_{0}(\check{\gamma})$. The set of all
Klein foams of topological type $[\Gamma]$ is in one-to-one correspondence
with $T_{t}/\mathsf{Mod}_{[\Gamma]}$. Therefore

\begin{theorem}
\label{t3.3} The space of Klein foams of a given topological type
is connected and homeomorphic to $\mathbb{R}^{n}/\mathsf{Mod}$,
where $\mathsf{Mod} $ is a discrete group.
\end{theorem}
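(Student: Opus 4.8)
The plan is to reduce everything to the already-established structure theory for Klein surfaces (Theorem 3.1) and for morphisms of Klein surfaces (Theorem 3.2), using the combinatorial description of Klein foams via foam systems developed in subsections 3.3 and 3.4. The starting point is the identification, valid for a fixed topological type $[\Gamma]$, of the set of all Klein foams of that type with the quotient $T_{t}/\mathsf{Mod}_{[\Gamma]}$, where $T_{t}$ is the Teichm\"uller-type space of admissible monomorphisms $\psi:\check\gamma\to\Aut(U)$ and $\mathsf{Mod}_{[\Gamma]}=\widehat{\mathsf{Mod}}(\check\gamma)^{\otimes n}/\Aut_{0}(\check\gamma)$. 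This correspondence is precisely what the lemmas preceding the theorem have set up: a foam of type $[\Gamma]$ is $\Omega^{\psi}_{\Gamma}$ for some $\psi$, two such foams with the same $\psi$-class are isomorphic, and the residual ambiguity is exactly the action of $\widehat{\mathsf{Mod}}(\check\gamma)^{\otimes n}$ on the subgroups $\tilde\gamma^{l}$ modulo the inner automorphisms that are already quotiented out in passing from $\psi$ to its conjugacy class.

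The key steps, in order, are: (1) Invoke the homeomorphism $T_{\check t}\leftrightarrow\mathbb{R}^{6g_{\varepsilon}+3m+3k+2r+b_{1}+\cdots+b_{k}-6}$ from subsection 3.1, so that after patching the parabolic punctures one gets $T_{t}\cong\mathbb{R}^{n}$ for the appropriate $n$ (the exponent with $r=0$, since the target type $t$ has no interior or boundary punctures); connectedness of $\mathbb{R}^{n}$ is immediate. (2) Check that the action of $\mathsf{Mod}_{[\Gamma]}$ on $T_{t}$ is properly discontinuous: this is inherited from the fact, recalled in subsection 3.1, that $\mathsf{Mod}_{\check t}$ acts discretely on $T_{\check t}$, combined with the observation that $\widehat{\mathsf{Mod}}(\check\gamma)^{\otimes n}$ acts through its image in a finite-index-compatible way on the relevant quotient — the stabilizer of each $\tilde\gamma^{l}$ has finite index, so the diagonal-type action is still discrete. (3) Put the quotient topology on $T_{t}/\mathsf{Mod}_{[\Gamma]}$ and identify it with the natural topology on the moduli space of foams of type $[\Gamma]$, using that the canonical morphism $\phi_{\Omega}$ and hence the associated foam system vary continuously with $\psi$. (4) Conclude that the space is homeomorphic to $\mathbb{R}^{n}/\mathsf{Mod}$ with $\mathsf{Mod}=\mathsf{Mod}_{[\Gamma]}$ a discrete group, and connected as a continuous image of the connected space $\mathbb{R}^{n}$.

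The main obstacle I expect is step (2): verifying that the $n$-fold product $\widehat{\mathsf{Mod}}(\check\gamma)^{\otimes n}$, acting simultaneously on the subgroups $\tilde\gamma^{1},\dots,\tilde\gamma^{n}$ and on the edge set $H$, still acts properly discontinuously on $T_{t}$ after one has quotiented only by a single copy of $\Aut_{0}(\check\gamma)$ rather than by the full diagonal. One must argue that the extra factors, although they permute infinitely many conjugacy-class data $C^{l}_{i,j}$, only produce finitely many foam systems compatible with a given point $\psi\in T_{t}$ up to the discrete $\mathsf{Mod}_{\check t}$-action, because each $\tilde\gamma^{l}$ has finite index in $\check\gamma$ and the edge structure $H$ is a finite combinatorial datum. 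Granting this — which follows by the same covering-space counting argument underlying the three bullet points of subsection 3.2 — the remaining identifications of topologies and the connectedness statement are routine, exactly paralleling the passage from Theorem 3.1 to Theorem 3.2.
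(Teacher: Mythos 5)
Your proposal follows essentially the same route as the paper: the paper's own argument consists precisely of identifying the set of Klein foams of topological type $[\Gamma]$ with $T_{t}/\mathsf{Mod}_{[\Gamma]}$ via the foam-system construction and the preceding lemmas, and then invoking the homeomorphism $T_{t}\cong\mathbb{R}^{n}$ and discreteness of $\mathsf{Mod}_{[\Gamma]}$ from subsections \ref{r3.1} and \ref{r3.2}. Your steps (2) and (3) supply details (proper discontinuity of the product action and the matching of topologies) that the paper leaves implicit, but the overall strategy is identical.
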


\end{document}